\documentclass{article}
\usepackage{amsmath,amssymb,amsthm}
\usepackage{algorithm,algorithmic}
\usepackage[round,sort&compress]{natbib}
\usepackage[dvipdfm]{hyperref}

 \newcommand{\RR}{\mathbb{R}}

\newcommand{\EE}{\mathbb{E}}

 \newcommand{\vu}{\boldsymbol{u}}

 \newcommand{\vdelta}{\boldsymbol{\delta}}
 
 \newcommand{\vepsilon}{\boldsymbol{\epsilon}}

 \newcommand{\tX}{\mathcal{X}}
 
 \newcommand{\tW}{\mathcal{W}}

\newcommand{\norm}[1]{\bigl|\!\bigl|\!\bigl|#1\bigr|\!\bigr|\!\bigr|}

 \def\dot#1#2{\left\langle #1,#2\right\rangle}

\newtheorem{lemma}{Lemma}
\newtheorem{theorem}{Theorem}

\newtheorem{corollary}{Corollary}

\title{Spectral norm of random tensors}
\author{Ryota Tomioka and Taiji Suzuki}
\begin{document}
\maketitle

\begin{abstract}
 We show that the spectral norm of a random $n_1\times n_2\times \cdots
\times n_K$ tensor (or higher-order array) scales as $O\left(\sqrt{(\sum_{k=1}^{K}n_k
 )\log(K)}\right)$ under some sub-Gaussian assumption on the entries. The proof
 is based on a covering number argument. Since the spectral norm is dual
 to the tensor nuclear norm (the tightest convex relaxation of the set
 of rank one tensors), the bound implies that the convex relaxation
 yields sample complexity that is linear in (the sum of) the number of
 dimensions, which is much smaller than other recently proposed convex relaxations of
 tensor rank that use unfolding.
\end{abstract}
\section{Notation and main result}
Let $\tX\in\RR^{n_1\times\cdots\times n_K}$ be a K-way tensor. The
spectral norm of $\tX$ is defined as follows:
\begin{align}
\label{eq:spectralnorm}
 \norm{\tX} =
 \sup_{\vu_1,\vu_2,\ldots,\vu_k}\tX(\vu_1,\vu_2,\ldots,\vu_K)
\quad{\rm s.t.}\quad \vu_k\in S_{n_k-1}\quad (k=1,\ldots,K),
\end{align}
where
$\tX(\vu_1,\ldots,\vu_K)=\sum_{i_1,i_2,\cdots,i_K}X_{i_1i_2\cdots i_K}u_{1i_1}u_{2i_2}\cdots
u_{Ki_K}$ and $S_{n_k-1}$ is the unit sphere in $\RR^{n_k}$.

\begin{lemma}
\label{lem:entrytoproduct}
 Assume that each element $X_{i_1i_2\cdots i_K}$ is independent,
 zero-mean, and satisfies $\EE [e^{tX_{i_1\cdots i_K}}]\leq e^{\sigma^2
 t^2/2}$. Then we have
\begin{align*}
P\left(|\tX(\vu_1,\ldots,\vu_K)|\geq t\right)\leq 2\exp\left(-\frac{t^2}{2\sigma^2}\right),
\end{align*}
if $\vu_k\in S_{n_k-1}$ for $k=1,\ldots, K$.
\end{lemma}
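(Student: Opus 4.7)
The plan is to show that $\tX(\vu_1,\ldots,\vu_K)$ is itself a sub-Gaussian random variable with the \emph{same} parameter $\sigma^2$, and then apply the standard Chernoff bound. The key observation is that for fixed unit vectors $\vu_1,\ldots,\vu_K$, the quantity $\tX(\vu_1,\ldots,\vu_K)=\sum_{i_1,\ldots,i_K} X_{i_1\cdots i_K}\,w_{i_1\cdots i_K}$ is a linear combination of independent sub-Gaussian entries with deterministic coefficients $w_{i_1\cdots i_K}=u_{1i_1}u_{2i_2}\cdots u_{Ki_K}$.

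First I would bound the moment generating function. For any $s\in\RR$, by independence of the entries,
\begin{align*}
\EE\bigl[e^{s\tX(\vu_1,\ldots,\vu_K)}\bigr]
=\prod_{i_1,\ldots,i_K}\EE\bigl[e^{sw_{i_1\cdots i_K}X_{i_1\cdots i_K}}\bigr]
\leq \prod_{i_1,\ldots,i_K} e^{\sigma^2 s^2 w_{i_1\cdots i_K}^2/2},
\end{align*}
where the inequality uses the assumed per-entry sub-Gaussian bound applied with parameter $sw_{i_1\cdots i_K}$.

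Second I would simplify the exponent using the product structure of the coefficients:
\begin{align*}
\sum_{i_1,\ldots,i_K} w_{i_1\cdots i_K}^2
=\sum_{i_1,\ldots,i_K} u_{1i_1}^2 u_{2i_2}^2\cdots u_{Ki_K}^2
=\prod_{k=1}^{K}\sum_{i_k} u_{ki_k}^2
=\prod_{k=1}^{K}\|\vu_k\|_2^2 =1,
\end{align*}
since each $\vu_k\in S_{n_k-1}$. This gives $\EE[e^{s\tX(\vu_1,\ldots,\vu_K)}]\leq e^{\sigma^2 s^2/2}$.

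Finally I would finish with the usual Chernoff argument. Markov's inequality yields $P(\tX(\vu_1,\ldots,\vu_K)\geq t)\leq e^{-st}\EE[e^{s\tX(\vu_1,\ldots,\vu_K)}]\leq e^{\sigma^2 s^2/2-st}$; optimizing by taking $s=t/\sigma^2$ produces the bound $e^{-t^2/(2\sigma^2)}$, and symmetrically for the lower tail by replacing $s$ with $-s$ (the argument for $-\tX$ is identical since the hypothesis is symmetric in sign). A union bound over the two tails introduces the factor of $2$. There is no real obstacle here; the only thing to keep track of is the tensor-product factorization of $\sum w^2$, which is precisely where the normalization $\vu_k\in S_{n_k-1}$ is used and why the sub-Gaussian parameter does not deteriorate with $K$.
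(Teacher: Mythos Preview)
Your proof is correct and follows essentially the same approach as the paper: bound the MGF of $\tX(\vu_1,\ldots,\vu_K)$ via the per-entry sub-Gaussian assumption and independence, use the factorization $\sum_{i_1,\ldots,i_K}u_{1i_1}^2\cdots u_{Ki_K}^2=1$ to keep the variance proxy at $\sigma^2$, and finish with the Chernoff bound plus a union over the two tails. The only cosmetic difference is that you explicitly invoke independence to split the MGF into a product, whereas the paper states the per-term bound and sums in the exponent directly.
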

\begin{proof}
By the assumption $E\left[e^{s X_{i_1i_2\cdots i_K}u_{1i_1}u_{2i_2}\cdots
u_{Ki_K}}\right]\leq \exp(u_{1i_1}^2u_{2i_2}^2\cdots
 u_{Ki_K}^2\sigma^2s^2/2)$. Then follow the line of the proof of
 Hoeffding's inequality to obtain 
\begin{align*}
 P\left(\tX(\vu_1,\ldots,\vu_K)\geq
 t\right)&=P\left(e^{s\tX(\vu_1,\ldots,\vu_K)}\geq e^{st}\right)\\
&\leq e^{-st} E\left[e^{s\tX(\vu_1,\ldots,\vu_K)}\right]\\
&\leq
 \exp\biggl\{-st+\frac{\sigma^2s^2}{2}\underbrace{\sum_{i_1=1}^{n_1}u_{1i_1}^2\sum_{i_2=1}^{n_2}u_{2i_2}^2\cdots\sum_{i_K=1}^{n_K}u_{Ki_K}^2}_{=1}\biggr\}\\
&=\exp\left(-st+\frac{\sigma^2s^2}{2}\right).
\end{align*}
Minimizing over $s$, the right-hand side becomes
 $e^{-t^2/(2\sigma^2)}$. Similarly we obtain $P(\tX(\vu_1,\ldots,\vu_K)\leq
 -t)\leq e^{-t^2/(2\sigma^2)}$, and the statement is obtained by taking
 the union of the two cases.
\end{proof}

\begin{theorem}
\label{thm:specnormbound}
 Assume that for each fixed $\vu_k\in S_k$ ($k=1,\ldots,K$), we have
\begin{align*}
P\left(|\tX(\vu_1,\ldots,\vu_K)|\geq t\right)\leq 2\exp\left(-\frac{t^2}{2\sigma^2}\right).
\end{align*}
 Then the spectral norm $\norm{\tX}$ can be bounded as follows:
\begin{align*}
 \norm{\tX}\leq\sqrt{8\sigma^2\left(\left(\sum\nolimits_{k=1}^{K}n_k\right)\log(2K/K_0)+\log(2/\delta)\right)},
\end{align*}
with probability at least $1-\delta$ and $K_0=\log(3/2)$.
\end{theorem}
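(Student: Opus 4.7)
The plan is a standard $\epsilon$-net / covering argument. Since the hypothesis already furnishes a sub-Gaussian tail at every fixed tuple of unit vectors, I only need three ingredients: a finite net on each sphere $S_{n_k-1}$, a union bound over the resulting product net, and a multilinear ``telescoping'' to control the discretisation error.

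First I would pick an $\epsilon$-net $\mathcal{N}_k\subset S_{n_k-1}$ of cardinality at most $(3/\epsilon)^{n_k}$ via the standard volumetric estimate, so that the product net $\mathcal{N}:=\mathcal{N}_1\times\cdots\times\mathcal{N}_K$ has size at most $\exp\bigl(\log(3/\epsilon)\sum_{k=1}^{K} n_k\bigr)$. Applying the pointwise tail bound at every point of $\mathcal{N}$ and taking a union bound gives, with probability at least $1-\delta$,
\[
\max_{(\vu_k')\in\mathcal{N}}\bigl|\tX(\vu_1',\ldots,\vu_K')\bigr|\;\leq\;\sigma\sqrt{2\Bigl(\textstyle\sum_{k=1}^{K} n_k\log(3/\epsilon)+\log(2/\delta)\Bigr)}.
\]

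Next I would pass from this discrete maximum back to the continuous supremum. Let $(\vu_1^*,\ldots,\vu_K^*)$ attain (or nearly attain) $\norm{\tX}$, and choose $\vu_k'\in\mathcal{N}_k$ with $\|\vu_k^*-\vu_k'\|\leq\epsilon$ for each $k$. Multilinearity yields the telescoping identity
\[
\tX(\vu_1^*,\ldots,\vu_K^*)=\tX(\vu_1',\ldots,\vu_K')+\sum_{k=1}^{K}\tX(\vu_1',\ldots,\vu_{k-1}',\vu_k^*-\vu_k',\vu_{k+1}^*,\ldots,\vu_K^*),
\]
and each hybrid term is bounded by $\epsilon\norm{\tX}$: rescaling the short argument $\vu_k^*-\vu_k'$ to unit norm turns the expression into a $K$-linear form evaluated on unit vectors, and the spectral norm equals its supremum over the closed unit ball. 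Rearranging gives the contraction inequality $\norm{\tX}\leq(1-K\epsilon)^{-1}\max_{\mathcal{N}}|\tX|$, valid as long as $K\epsilon<1$.

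Finally I would tune $\epsilon$ to balance the two competing effects. Setting $\epsilon=K_0/K$ for a constant $K_0\in(0,1)$, squaring the contraction inequality, and plugging in the union bound gives
\[
\norm{\tX}^2\;\leq\;\frac{2\sigma^2}{(1-K_0)^2}\Bigl(\textstyle\sum_{k=1}^{K} n_k\log(3K/K_0)+\log(2/\delta)\Bigr),
\]
and the stated value $K_0=\log(3/2)$ is chosen to balance the prefactor $(1-K_0)^{-2}$ against the covering-entropy term so that the residual $\log(3/2)$ can be absorbed into an overall constant (yielding the leading $8\sigma^2$ and the $\log(2K/K_0)$ inside). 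The argument is otherwise routine; the only delicate point is this last constant-tracking step, in particular verifying that the choice of $K_0$ really collapses the raw bound into the clean form $\,8\sigma^2\bigl((\sum_k n_k)\log(2K/K_0)+\log(2/\delta)\bigr)$ claimed in the theorem.
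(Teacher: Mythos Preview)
Your covering/union-bound/multilinear-error argument is exactly the paper's approach; the only substantive difference, and the resolution of the constant-tracking step you flagged as delicate, is in how the discretisation error is organised. The paper does \emph{not} telescope: it expands $\tX(\bar{\vu}_1+\vdelta_1,\ldots,\bar{\vu}_K+\vdelta_K)$ into all $2^K$ pieces, so the total error is
\[
\sum_{j=1}^{K}\epsilon^{j}\binom{K}{j}\norm{\tX}\;\leq\;\bigl(e^{K\epsilon}-1\bigr)\norm{\tX},
\]
and the choice $\epsilon=K_0/K$ with $K_0=\log(3/2)$ makes $e^{K\epsilon}-1=\tfrac12$ \emph{exactly}, giving the clean contraction factor $2$. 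Combined with the covering estimate $|C_k|\leq (2/\epsilon)^{n_k}$ (rather than your $(3/\epsilon)^{n_k}$), this produces the $8\sigma^2$ and the $\log(2K/K_0)$ on the nose. Your telescoping yields a perfectly valid bound of the same order, but with prefactor $2/(1-K_0)^2\approx 5.66$ and entropy $\log(3K/K_0)$; these do \emph{not} algebraically collapse to the stated form, so your suspicion about that last step was correct.
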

\begin{proof}
We use a covering number argument. Let $C_1,\ldots,C_K$ be $\epsilon$-covers
 of $S^{n_1-1}, \ldots, S^{n_K-1}$. Then since
 $S^{n_1-1}\times\cdots\times S^{n_K-1}$ is compact, there is a
 maximizer $(\vu_1^\ast,\ldots,\vu_K^\ast)$ of \eqref{eq:spectralnorm}
 and using the $\epsilon$-covers, we can write
\begin{align*}
 \norm{\tX}=\tX(\bar{\vu}_1+\vdelta_1,\bar{\vu}_2+\vdelta_2,\ldots,\bar{\vu}_K+\vdelta_K),
\end{align*}
where $\bar{\vu}_k\in C_k$ and $\|\vdelta_k\|\leq \epsilon$ for
 $k=1,\ldots,K$ by the definition.
Now
\begin{align*}
 \norm{\tX}\leq&\tX(\bar{\vu}_1,\ldots,\bar{\vu}_K)+\left(\epsilon K + \epsilon^2\binom{K}{2}+\cdots \epsilon^K\binom{K}{K}\right)\norm{\tX}.
\end{align*}
Take $\epsilon=K_0/K$ then the sum inside the parenthesis
 can be bounded as follows:
\begin{align*}
\epsilon K + \epsilon^2\binom{K}{2}+\cdots \epsilon^K\binom{K}{K}&\leq
 \epsilon K + \frac{(\epsilon K)^2}{2!}+\cdots \frac{(\epsilon
 K)^K}{K!}\leq e^{\epsilon K}-1= \frac{1}{2}.
\end{align*}
Thus we have 
\begin{align*}
 \norm{\tX}\leq 2 \max_{\bar{\vu}_1\in C_1,\ldots,\bar{\vu}_K\in C_K}\tX(\bar{\vu}_1,\ldots,\bar{\vu}_K).
\end{align*}
Since the $\epsilon$-covering number $|C_k|$ can be bounded by
 $\epsilon/2$-packing number, which can be bounded by
 $(2/\epsilon)^{n_k}$, using the union bound we obtain
\begin{align*}
 P(\norm{\tX}\geq t)&\leq \sum_{\bar{\vu}_1\in C_1,\ldots,\bar{\vu}_K\in
 C_K}
P\left(\tX(\bar{\vu}_1,\ldots,\bar{\vu}_K)\geq \frac{t}{2}\right)
\\
&\leq \left(\frac{2K}{K_0}\right)^{\sum_{k=1}^{K}n_k}\cdot2\exp\left(-\frac{t^2}{8\sigma^2}\right).
\end{align*}
Finally, we take
 $t=\sqrt{8\sigma^2\left((\sum_{k}n_k)\log(2K/K_0)+\log(2/\delta)\right)}$
 to obtain our claim.
\end{proof}

We note that a similar bound was proved in \cite{NguDriTra10}. We
believe that our proof is more concise and simple.

\subsection{Implication for tensor recovery with Gaussian measurements}
\begin{corollary}
Assume that each entry $X_{i_1\cdots i_K}$ is conditionally independent
 given $\vepsilon=(\epsilon_i)_{i=1}^{M}$ and distributed as
\begin{align*}
 X_{i_1\cdots i_K} = \sum_{j=1}^{M}\epsilon_j W_{ji_1i_2\cdots i_K},
\end{align*}
where each $W_{ji_1i_2\cdots i_K}$ is independent, zero-mean, and
 satisfies $\EE[e^{tW_{ji_1i_2\cdots i_K}}]\leq \exp(t^2/2)$; in
 addition, each $\epsilon_i$ is also independent, zero-mean and satisfes
 $\EE[e^{t\epsilon_i}]\leq \exp(\sigma^2t^2/2)$. If $M\geq
 2\log(2/\delta)$, then with probability at least $1-\delta$, we have
\begin{align*}
 \norm{\tX}\leq \sqrt{32M\sigma^2\left(\sum_{k=1}^{K}n_k\log(2K/K_0)+\log(4/\delta)\right)}
\end{align*}
\end{corollary}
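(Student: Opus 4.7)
The plan is to condition on $\vepsilon=(\epsilon_j)_{j=1}^M$ so that Theorem~\ref{thm:specnormbound} applies to $\tX$ with an effective variance proxy equal to $\|\vepsilon\|^2=\sum_j \epsilon_j^2$, then control $\|\vepsilon\|^2$ separately by a Chernoff-style estimate for a sum of sub-Gaussian squares, and finally combine the two via a union bound.

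For the reduction step, I would fix unit vectors $\vu_k\in S_{n_k-1}$ and observe that
\[
 \tX(\vu_1,\ldots,\vu_K)=\sum_{j=1}^{M}\epsilon_j\sum_{i_1,\ldots,i_K}W_{ji_1\cdots i_K}u_{1i_1}\cdots u_{Ki_K}
\]
is, conditional on $\vepsilon$, a linear combination of the independent sub-Gaussian-$1$ variables $W_{ji_1\cdots i_K}$ with coefficients $\epsilon_j u_{1i_1}\cdots u_{Ki_K}$. Reproducing the MGF computation in the proof of Lemma~\ref{lem:entrytoproduct}, the sum of squared coefficients factorizes as $\sum_j\epsilon_j^2\cdot\sum_{i_1,\ldots,i_K}u_{1i_1}^2\cdots u_{Ki_K}^2=\|\vepsilon\|^2$, so the hypothesis of Theorem~\ref{thm:specnormbound} holds conditionally on $\vepsilon$ with $\sigma^2$ replaced by $\|\vepsilon\|^2$. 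Applying that theorem conditionally at level $\delta/2$ then yields
\[
 \norm{\tX}\leq\sqrt{8\|\vepsilon\|^2\left(\sum\nolimits_{k=1}^{K}n_k\log(2K/K_0)+\log(4/\delta)\right)}
\]
with conditional probability at least $1-\delta/2$.

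For the second step, I would show that $\|\vepsilon\|^2\leq 4M\sigma^2$ with unconditional probability at least $1-\delta/2$. Using the standard sub-Gaussian-squared MGF bound $\EE[e^{s\epsilon_j^2}]\leq(1-2\sigma^2 s)^{-1/2}$ valid for $s\in[0,1/(2\sigma^2))$, a Chernoff argument at $s=1/(4\sigma^2)$ gives $P(\|\vepsilon\|^2\geq 4M\sigma^2)\leq(\sqrt{2}/e)^M$, and the hypothesis $M\geq 2\log(2/\delta)$ makes this at most $\delta/2$ with a little room to spare. A union bound over the two failure events, together with the monotone substitution $\|\vepsilon\|^2\leq 4M\sigma^2$ inside the square root, turns the leading constant from $8$ into $32$ and produces the claimed inequality.

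There is no deep obstacle: the argument is essentially a conditioning-plus-concentration reduction to Theorem~\ref{thm:specnormbound}. The only bookkeeping that needs care is the splitting $\delta=\delta/2+\delta/2$, which turns $\log(2/\delta)$ into $\log(4/\delta)$, and the factor of four in the bound $\|\vepsilon\|^2\leq 4M\sigma^2$, which is calibrated to make $8\cdot 4=32$ match the stated leading constant; the assumed lower bound on $M$ is slightly more than strictly needed to absorb the $(\sqrt{2}/e)^M$ tail.
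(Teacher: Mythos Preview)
Your proposal is correct and follows essentially the same route as the paper: condition on $\vepsilon$ to obtain a sub-Gaussian tail with variance proxy $\|\vepsilon\|^2$, invoke Theorem~\ref{thm:specnormbound} conditionally, and combine with a high-probability bound $\|\vepsilon\|^2\le 4M\sigma^2$ via a $\delta/2+\delta/2$ union bound. Your Chernoff estimate $(\sqrt{2}/e)^M$ is in fact slightly sharper than the $e^{-M/2}$ the paper quotes, so the hypothesis $M\ge 2\log(2/\delta)$ is more than enough, exactly as you note.
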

\begin{proof}
 Conditioned on $\vepsilon$, the moment generating function
 $\EE[\exp(t\tX(\vu_1,\ldots,\vu_K))]$ can be bounded as follows:
\begin{align*}
\EE\left[e^{t\tX(\vu_1,\ldots,\vu_K)}\right]&=\prod_{i_1}\cdots\prod_{i_K}\prod_{j}\EE\left[e^{t\epsilon_ju_{1i_1}\cdots
 u_{Ki_K}W_{ji_1\ldots i_K}}\right]\\
&\leq \exp\left(\frac{\|\vepsilon\|^2 t^2}{2}\right),
\end{align*}
where we used the fact that $\sum_{i_1}\cdots\sum_{i_K}u_{1i_1}^2\cdots
 u_{Ki_K}^2=1$. Therefore, we have
\begin{align*}
 P(\left.\left|\tX(\vu_1,\ldots,\vu_K)\right|\geq t\right|\vepsilon)\leq 2
\exp\left(-\frac{t^2}{2\|\vepsilon\|^2}\right),
\end{align*}
using Hoeffding's inequality.

Now we can apply Theorem \ref{thm:specnormbound} as follows:
\begin{align*}
 P(\norm{\tX}\geq t)=&P\left(\left.\norm{\tX}\geq t\right|\|\vepsilon\|\leq
 2\sqrt{M\sigma^2}\right)\underbrace{P(\|\vepsilon\|\leq
 2\sqrt{M\sigma^2})}_{\leq 1}\\
&+\underbrace{P\left(\left.\norm{\tX}\geq t\right|\|\vepsilon\|>
 2\sqrt{M\sigma^2}\right)}_{\leq 1} P(\|\vepsilon\|>2\sqrt{M\sigma^2})\\
\leq &
 \left(\frac{2K}{K_0}\right)^{\sum_{k=1}^{K}n_k}\cdot2\exp\left(-\frac{t^2}{32M\sigma^2}\right)
 + \exp\left(-\frac{M}{2}\right)\\
\leq & \frac{\delta}{2}+\frac{\delta}{2}=\delta.
\end{align*}
\end{proof}

\subsection{Implication for sampling without replacement}
\begin{corollary}
 Suppose $\tX$ contains $M$ nonzero entries sampled uniformly
 without replacement; each entry is a random variable $\epsilon_j$
 $(j=1,\ldots,M)$ that satisfies $\EE[e^{t\epsilon_j}]\leq
 \exp(\sigma^2t^2/2)$. Then we have
\begin{align*}
 \norm{\tX}\leq\sqrt{8\sigma^2\left(\left(\sum\nolimits_{k=1}^{K}n_k\right)\log(2K/K_0)+\log(2/\delta)\right)},
\end{align*}
with probability at least $1-\delta$ and $K_0=\log(3/2)$. 
\end{corollary}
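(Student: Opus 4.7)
The plan is to reduce this corollary directly to Theorem~\ref{thm:specnormbound} by showing that, for each fixed $(\vu_1,\ldots,\vu_K)$ with $\vu_k\in S^{n_k-1}$, the scalar $\tX(\vu_1,\ldots,\vu_K)$ is sub-Gaussian with parameter at most $\sigma^2$. Once this pointwise tail bound is verified, the hypothesis of Theorem~\ref{thm:specnormbound} is met and the stated conclusion follows verbatim.

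First I would write $\tX(\vu_1,\ldots,\vu_K) = \sum_{j=1}^{M}\epsilon_j\, u_{1 i_1^{(j)}}u_{2 i_2^{(j)}}\cdots u_{K i_K^{(j)}}$, where $(i_1^{(j)},\ldots,i_K^{(j)})$ denotes the $j$-th sampled multi-index. Next I would condition on the random locations; since the $\epsilon_j$ are independent, zero-mean and sub-Gaussian with parameter $\sigma^2$, the Chernoff/Hoeffding calculation used in Lemma~\ref{lem:entrytoproduct} gives
\[
\EE\!\left[e^{s\tX(\vu_1,\ldots,\vu_K)} \,\bigm|\, \text{positions}\right]
\leq \exp\!\left(\frac{\sigma^2 s^2}{2}\sum_{j=1}^{M}\bigl(u_{1 i_1^{(j)}}\cdots u_{K i_K^{(j)}}\bigr)^2\right).
\]

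The crucial step is to control that exponent. Because the sampling is \emph{without replacement}, the $M$ chosen multi-indices are distinct, so $\sum_{j=1}^{M}(u_{1 i_1^{(j)}}\cdots u_{K i_K^{(j)}})^2$ is a sub-sum of the full non-negative sum $\sum_{i_1,\ldots,i_K}(u_{1i_1}\cdots u_{Ki_K})^2 = \prod_{k=1}^{K}\sum_{i_k}u_{ki_k}^2 = 1$, and is therefore bounded by $1$ regardless of which positions were drawn. Consequently the conditional MGF is dominated by $\exp(\sigma^2 s^2/2)$, and optimising over $s$ together with a two-sided union bound yields $P(|\tX(\vu_1,\ldots,\vu_K)|\geq t \mid \text{positions})\leq 2\exp(-t^2/(2\sigma^2))$. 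Since this upper bound is deterministic, averaging over the sampled positions preserves it and furnishes precisely the unconditional hypothesis of Theorem~\ref{thm:specnormbound}.

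I do not anticipate any real obstacle; the argument is essentially a one-line reduction to Theorem~\ref{thm:specnormbound}. The only subtle point worth emphasising is the role of "without replacement": it is used solely to guarantee distinctness of the $M$ summands so that their squared products can be bounded by the full product-of-spheres identity. Under sampling \emph{with} replacement the same coordinate product could be selected multiple times, and the sub-Gaussian parameter could inflate to as much as $M\sigma^2$, so this structural observation is exactly what keeps the corollary's bound identical to the theorem's.
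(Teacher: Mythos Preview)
Your proposal is correct and essentially identical to the paper's own proof: the paper writes $\tX=\sum_{j}\epsilon_j\tW_j$ with indicator tensors $\tW_j$, bounds $\sum_{j}\langle\tW_j,\vu_1\circ\cdots\circ\vu_K\rangle^2\leq\|\vu_1\circ\cdots\circ\vu_K\|_F^2=1$ using distinctness of the sampled positions, applies Hoeffding conditionally on the positions, averages out, and invokes Theorem~\ref{thm:specnormbound}. Your coordinate-wise formulation and the paper's tensor-inner-product formulation are the same argument in different notation, and your closing remark about why ``without replacement'' is exactly the needed structural fact matches the paper's use of it.
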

\begin{proof}
This is analogous to the proof of Lemma 4 in \cite{RohTsy11}. Let
 $\tW_1,\ldots,\tW_M$ be tensors that each are an indicator of the observed
 positions. Then $\tX=\sum_{j=1}^{M}\epsilon_j\tW_j$. Since each entry
 is observed maximally once, we have
\begin{align*}
 \sum_{j=1}^{M}\tW_j^2(\vu_1,\ldots,\vu_K)=\sum_{j=1}^{M}\dot{\tW_j}{\vu_1\circ
 \vu_2\circ\cdots\circ \vu_K}^2\leq \|\vu_1\circ\cdots\circ\vu_K\|_F^2=1.
\end{align*}
Thus using Hoeffding's inequality
\begin{align*}
 P\left(|\tX(\vu_1,\ldots,\vu_K)|\geq t|(\tW_j)\right)\leq 2\exp\left(-\frac{t^2}{2\sigma^2}\right).
\end{align*}
Taking expectation over the choice of $\tW_j$ $(j=1,\ldots,M)$, we obtain
\begin{align*}
 P\left(|\tX(\vu_1,\ldots,\vu_K)|\geq t\right)\leq 2\exp\left(-\frac{t^2}{2\sigma^2}\right).
\end{align*}
The claim now follows from Theorem~\ref{thm:specnormbound}.
\end{proof}

\bibliographystyle{abbrvnat}
\bibliography{icml2014}
\end{document}